\documentclass[11pt,a4paper]{article}
\usepackage{amsmath,amssymb}
\usepackage{bm}
\usepackage{ascmac}
\usepackage[dvipdfmx]{graphicx}
\usepackage{enumerate}
\usepackage{tikz}
\usepackage[left=3cm, right=3cm]{geometry}
\usepackage{comment}

\newcommand{\rC}{\overrightarrow{C}}
\newcommand{\lC}{\overleftarrow{C}}

\usepackage{amsthm}
\newtheorem{thm}{Theorem}
\newtheorem{prethm}{Theorem}

\newtheorem{lem}[thm]{Lemma}
\newtheorem{con}[thm]{Conjecture}

\newtheorem{cla}{Claim}[section]
\newtheorem*{subcla4.6.1}{Subclaim 4.6.1}

\theoremstyle{definition}

\newtheorem{prob}[thm]{Problem}

\newtheorem*{prf}{Proof}
\newtheorem*{mainprf}{Proof of Theorem \ref{thm:main1}}

\usepackage{latexsym}
\def\qed{\hfill $\Box$}

\definecolor{mygray}{gray}{0.9}

\begin{document}

\title{On hamiltonian cycles of 1-tough $(P_2 \cup kP_1)$-free graphs} 

\author{%
    Masahiro Sanka\thanks{E-mail address: \texttt{sankamasa@gmail.com}}\\ 
    Department of Mathematics, Faculty of Science Division II, \\ Tokyo University of Science, \\
    1-3 Kagurazaka, Shinjuku, Tokyo 162-8601, Japan
} 

\date{}

\maketitle

\begin{abstract}
    Let $k$ be a positive integer.
    A graph is said to be $(P_2 \cup kP_1)$-free if it does not contain $P_2 \cup kP_1$ as an induced subgraph.
    Recently, Ota and the author asked whether every 1-tough and $k$-connected $(P_2 \cup kP_1)$-free graph is hamiltonian or the Petersen graph.
    Note that this problem is affirmative for $k \in \{1,2,3\}$ by the known results.
    In this paper, we show that for each integer $k \geq 4$, if $G$ is a $1$-tough and $(k-1)$-connected $(P_2 \cup kP_1)$-free graph with $|V(G)| \ge k^2+k+1$ and $\delta(G) \ge k$, then $G$ is hamiltonian.
    This result implies that the above question is affirmative for large graphs.
\end{abstract}

\noindent
\textbf{Keywords.} 1-tough graph; hamiltonian cycle; $(P_2 \cup kP_1)$-free graph

\section{Introduction}\label{sec:intro}
All graphs considered in this paper are finite and simple graphs.
We use the terminology not defined here in \cite{Diestel-2017}, and additional definitions will be given as needed.

A \emph{hamiltonian cycle} in a graph is a cycle passing through all the vertices of this graph.
We say that a graph is \emph{hamiltonian} if it has a hamiltonian cycle.
The notion of toughness was introduced in the study of hamiltonian cycles by Chv\'{a}tal \cite{Chvatal-1973}.
The \emph{toughness} of a graph $G$, denoted by $\tau(G)$, is defined by
\[\tau(G) = \min \left\{\frac{|S|}{\omega(G-S)} \mid S \subset V(G) \text{ and } \omega(G-S) \ge2 \right\},\]
or $\tau(G)=\infty$ if $G$ is a complete graph.
For a real number $t \ge 0$, we say that $G$ is \emph{$t$-tough} if $\tau(G) \ge t$.
Note that a graph $G$ is $t$-tough if and only if $|S| \ge t \cdot \omega(G-S)$ for any subset $S \subset V(G)$ with $\omega(G-S) \ge 2$.

It is well known that every hamiltonian graph is 1-tough, but the converse does not hold.
In 1973, Chv\'{a}tal conjectured that the converse holds at least in an approximate sense.

\begin{con}[\cite{Chvatal-1973}]\label{con:Chvatal}
    There exists a constant $t_0 >0$ such that every $t_0$-tough graph on at least three vertices is hamiltonian.
\end{con}
 
Bauer, Broersma and Veldman \cite{Bauer-Broersma-Veldman-2000} showed that for any $t<\frac{9}{4}$, there exists a non-hamiltonian $t$-tough graph.
Thus, if Conjecture \ref{con:Chvatal} is true, then such a constant $t_0$ must be at least $\frac{9}{4}$.
For details of studies on Conjecture \ref{con:Chvatal}, we refer the reader to the survey \cite{Bauer-Broersma-Schmeichel-2006}.

For two graphs $R_1$ and $R_2$, we define $R_1 \cup R_2$ as the disjoint union of them.
Also, for a graph $R$ and a positive integer $m$, we denote the disjoint union of $m$ copies of $R$ by $mR$.
For a graph $R$, we say that a graph $G$ is \emph{$R$-free} if it does not contain $R$ as an induced subgraph.

Conjecture \ref{con:Chvatal} has been verified in various classes of graphs, such as chordal graphs 
\cite{Chen-Jacobson-Kezdy-Lehel-1998, Kabela-Kaiser-2017}, $2K_2$-free graphs \cite{Broersma-Patel-Pyatkin-2014, Ota-Sanka-2022, Shan-2019}, claw-free graphs \cite{Kaiser-Vrana-2012}, $(P_2 \cup P_3)$-free graphs \cite{Shan-2021}, and so on.
In this paper we consider the hamiltonicity of $(P_2 \cup kP_1)$-free graphs for each positive integer $k$.
Several results are known for this class of graphs, involving strengthened toughness conditions 
\cite{Hatfield-Grimm-2021, Hu-Wang-Shen-2025, Li-Broersma-Zhang-2016, Nikoghosyan-2013, Ota-Sanka-2024, Shi-Shan-2022, Xu-Li-Zhou}.
In particular, Ota and the author proved the following theorem.

\begin{prethm}[Ota and Sanka \cite{Ota-Sanka-2024}]\label{pre:Ota-Sanka}
    Let $k \ge 2$ be an integer and $G$ be a $1$-tough and $k$-connected $(P_2 \cup kP_1)$-free graph.
    If $\delta(G) \ge \frac{3k-3}{2}$, then $G$ is hamiltonian or isomorphic to the Petersen graph.
\end{prethm}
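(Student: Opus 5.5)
Theorem \ref{pre:Ota-Sanka} is quoted from \cite{Ota-Sanka-2024}, so here I only sketch the argument I would use to establish it. The plan is the standard longest-cycle argument. Suppose $G$ is not hamiltonian and choose a longest cycle $C$ with a fixed orientation. Since $G$ is $k$-connected with $k\ge 2$, it is $2$-connected, so $C$ exists and $V(G)\setminus V(C)\neq\emptyset$.

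Let $H$ be a component of $G-V(C)$ and let $x_1,\dots,x_m$ be its attachment vertices on $C$, in cyclic order. By $k$-connectivity, $m\ge k$. Let $x_i^+$ denote the successor of $x_i$ on $\rC$. Maximality of $C$ gives the usual facts:
\begin{itemize}
\item $X=\{x_1^+,\dots,x_m^+\}$ is independent;
\item no $x_i^+$ has a neighbour in $H$;
\item there are no crossing ``hop'' edges between segments, since otherwise $C$ could be rerouted through $H$ into a longer cycle.
\end{itemize}
Pick $v\in V(H)$ adjacent to $x_1$. Then $\{v\}\cup X$ is an independent set of size at least $k+1$.

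Now I use $(P_2\cup kP_1)$-freeness. For any edge $uw$, its non-neighbourhood $V(G)\setminus(N[u]\cup N[w])$ contains no $k$ independent vertices. If $m\ge k+1$, take the edge $vx_1$, or an edge $x_i^+x_i^{++}$, together with $k$ vertices of $X$ avoiding its neighbourhood; this gives an induced $P_2\cup kP_1$. Hence essentially $m=k$, and every edge meets $X\cup\{v\}$ in a controlled way. The main use of the degree bound $\delta(G)\ge\frac{3k-3}{2}$ is then a counting argument. Consider an edge $e$ such as $x_1^+x_1^{++}$ (or an edge inside $H$). Its ends must dominate all but at most $k-1$ vertices of a suitable independent set of size about $2k$, namely $X$, the predecessors $X^-$, and $v$. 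On the other hand, the non-hop lemmas forbid $x_i^+$ from being adjacent to many $x_j^{\pm}$. Adding the degree bounds $d(x_i^+)+d(x_i^-)\ge 3k-3$ against the at most $2k-2$ allowed adjacencies should force a longer cycle, which is a contradiction.

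Toughness enters when $m=k$ exactly. Removing $S=\{x_1,\dots,x_k\}$ leaves $H$ and, if the segments of $C$ behave like separate pieces, at least $k+1$ components, violating $1$-toughness. So I must show that the segments $x_i^+\rC x_{i+1}^-$ are pairwise non-adjacent, apart from edges that would give a longer cycle.

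The main obstacle is the extremal configuration with $k=3$ and $\delta=3$, where the Petersen graph arises. Here the counting is tight, so I would handle small $k$ separately: determine the structure when every inequality is an equality, and show that such a graph is cubic, $3$-connected, $(P_2\cup 3P_1)$-free and non-hamiltonian on $10$ vertices. I would then identify it as the Petersen graph by its girth and order.
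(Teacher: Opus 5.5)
There is a genuine gap at the central step. The paper itself does not prove this theorem; it is quoted from \cite{Ota-Sanka-2024}. However, Section~\ref{sec:result1} carries out essentially the argument behind it, and that is the benchmark used here.

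\textbf{The reduction to $m=k$ is false.} You claim that if $m\ge k+1$, then the edge $vx_1$ or $x_i^+x_i^{++}$, together with $k$ vertices of $X$, forms an induced $P_2\cup kP_1$. But $x_1$ is adjacent to $x_1^+\in X$, and $x_i^{++}$ is adjacent to $x_i^+\in X$. By Lemma~\ref{lem:P2kP1}, each of them therefore sees at least $|X|-k+1$ vertices of the independent set $X$. So each of these edges has at most $k-1$ common non-neighbours in $X$, and no contradiction arises.

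$(P_2\cup kP_1)$-freeness pushes the other way: every vertex touching $X$ must see almost all of $X$. It gives no upper bound on $m$. What it does force is $|V(H)|=1$, since an edge of $H$ plus $k$ vertices of $X$ is an induced $P_2\cup kP_1$. Then $m=d(h)\ge\delta\ge\frac{3k-3}{2}>k$ for every $k\ge 4$. Your toughness step, built on $m=k$, therefore treats a case that cannot occur for $k\ge 4$, and the real case $m\ge k+1$ is left with no argument.

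\textbf{The auxiliary counting is unsound.}
\begin{itemize}
\item $X\cup X^-\cup\{v\}$ need not be independent. Replacing $x_ix_i^+$ and $x_j^-x_j$ by $x_i^+x_j^-$ and an $x_i$--$x_j$ path through $H$ yields two cycles, not one. So edges $x_i^+x_j^-$ are not excluded by the maximality of $C$.
\item If a segment has at most two vertices, then $x_i^+=x_{i+1}^-$ or $x_i^+x_{i+1}^-\in E(C)$.
\item Such chords also join the segments $x_i^+\rC x_{i+1}^-$, so deleting $N(H)$ need not leave $k+1$ components.
\item The degree count is only asserted to ``should force'' a longer cycle, and the Petersen case is only a plan.
\end{itemize}

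\textbf{The missing idea} (implemented in Claims~\ref{cla3.1}--\ref{cla3.7}) is to use Lemma~\ref{lem:P2kP1} positively and to apply toughness to $N(U)$ rather than to $N(H)$.
\begin{itemize}
\item Let $U=N(h)^+$, so $U\cup\{h\}$ is independent. An edge with both ends outside $N(U)$, plus $k$ vertices of $U$, is an induced $P_2\cup kP_1$. Hence $V(G)\setminus N(U)$ is independent.
\item Also $N(U)\subseteq V(C)$. So $\alpha(G)\le |V(G)|/2$ (from $1$-toughness) forces two consecutive vertices $x,x^+$ of $C$ to lie in $N(U)$.
\item Set $l_x=\min\{i: u_ix\in E(G)\}$ and $r_x=\max\{i: u_ix^+\in E(G)\}$. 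Longest-cycle rerouting gives $N(x)\cap U\subseteq\{u_{l_x},\dots,u_d\}$, $N(x^+)\cap U\subseteq\{u_1,\dots,u_{r_x}\}$, $r_x\le l_x$, and $N(u_{l_x}^+)\cap U\subseteq\{u_{r_x},\dots,u_{l_x}\}$.
\item Lemma~\ref{lem:P2kP1} bounds these three sets below by $d-k+2$, $d-k+1$ and $d-k+2$ respectively. Hence $l_x\le k-1$, $r_x\ge d-k+1$ and $l_x-r_x\ge d-k+1$, so $d\le\frac{3k-3}{2}$.
\end{itemize}
Combined with $d\ge\delta\ge\frac{3k-3}{2}$, this leaves only the tight case $d=\frac{3k-3}{2}$. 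That case is possible only for odd $k$ and is realised by the Petersen graph with $k=d=3$. It needs a separate equality analysis, which neither this paper nor your sketch supplies.
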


In 2025, Hu, Wang and Shen extended the former result using the Fan-type condition.
As a corollary, they proved the following theorem.

\begin{prethm}[Hu, Wang and Shen \cite{Hu-Wang-Shen-2025}]\label{pre:Hu-Wang-Shen}
    Let $k \ge 2$ be an integer and $G$ be a $1$-tough and $k$-connected $(P_2 \cup kP_1)$-free graph.
    If $\delta(G) \ge \frac{7k-6}{5}$, then $G$ is hamiltonian or isomorphic to the Petersen graph.
\end{prethm}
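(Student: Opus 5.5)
We argue by contradiction with a longest-cycle argument. Suppose $G$ is non-hamiltonian and not the Petersen graph, and let $C$ be a longest cycle of $G$ with a fixed orientation; write $v^+$ and $v^-$ for the successor and predecessor of $v$ on $C$. Choose a component $H$ of $G-V(C)$. Let $u_1,\dots,u_m$ be the vertices of $N_C(H)$, listed in the order of $C$; $k$-connectivity gives $m\ge k$. The usual exchange arguments for a longest cycle give the following.
\begin{enumerate}[(i)]
\item Each $u_i^+$ has no neighbour in $H$.
\item The set $\{u_1^+,\dots,u_m^+\}$ is independent.
\item There are no ``crossing'' edges: $u_i^{+}u_j^{++}$ and $u_i^{-}u_j^{-}$-type chords are forbidden whenever they would yield a longer cycle.
\end{enumerate}
Hence for any $x\in V(H)$ the set $S=\{x,u_1^+,\dots,u_m^+\}$ is independent with $|S|\ge k+1$.

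The first use of $(P_2\cup kP_1)$-freeness is to show that $H$ is a single vertex. If $H$ had an edge $xy$, then $\{xy\}\cup\{u_1^+,\dots,u_k^+\}$ would induce $P_2\cup kP_1$, by (i) and (ii). Applying this to every component, $G-V(C)$ is an independent set. More generally, for every edge $ab$ of $G$ at most $k-1$ vertices of $S$ avoid $N(a)\cup N(b)$. This is the key ``domination'' property. I would apply it to edges on $C$ such as $u_i^+u_i^{++}$ and $u_i^-u_i$. This shows that the neighbourhoods of the vertices $u_i^{++}$, and of the vertices near the segments of $C$, must meet $S$ heavily. Combined with (iii), this forces many vertices of $S$ to have neighbours in prescribed positions on $C$.

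Next, count degrees. Let $x$ be the single vertex of $H$. Then $d(x)=m\ge\delta\ge\frac{7k-6}{5}$, and each $u_i^+$ has at least $\delta$ neighbours, all on $C$ or in $G-V(C)$. I would split $C$ into the segments $C_i=u_i^+\rC u_{i+1}$ and estimate, via (iii), how many neighbours two vertices of $S$ can have in the same segment without creating a longer cycle. This is Bondy-style counting: for each vertex $v$ of $C$, at most one of $u_i^+v$ and $u_j^+v^+$ can be an edge, and similar statements hold. The counting should give an upper bound on $\sum_{s\in S'} d(s)$ for a suitable $(k+1)$-subset $S'\subseteq S$ in terms of $|V(C)|$ and $m$. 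The domination property then gives a matching lower bound, because every edge of $C$ has to ``see'' all but at most $k-1$ vertices of $S'$. Comparing the two should force $m$ to be small, say $m\le \frac{7k-7}{5}$, contradicting $\delta\ge\frac{7k-6}{5}$. The one exception is a very rigid extremal configuration.

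To handle the remaining configurations I would use $1$-toughness. Take $T=N_C(x)$ and remove it; the vertex $x$ together with the pieces of $C-T$ gives $\omega(G-T)\ge$ the number of components. If the counting shows that the segments $C_i$ are short and that their interiors only see $T$, then $\omega(G-T)>|T|$, which contradicts $1$-toughness. What survives is the case $k=3$ with all segments of length $2$ and $m=3$, i.e.\ a $3$-regular graph on $10$ vertices. There I would check directly that $G$ is the Petersen graph.

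I expect the main obstacle to be the counting step: extracting the constant $\frac{7}{5}$ requires carefully choosing which $k+1$ vertices of $S$ to use and how to charge the edges of $C$. My first attempt would be to fix the vertex $u_i^+$ of smallest degree and apply the domination property to the $k$ edges $u_j^+u_j^{++}$ simultaneously.
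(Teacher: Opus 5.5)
This statement is not proved in the paper. It is quoted from Hu, Wang and Shen as a corollary of their Fan-type theorem, so there is no in-paper proof to compare with. Judged on its own, your proposal has a genuine gap.

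The opening is fine: $m\ge k$ by $k$-connectivity, $S=\{x\}\cup N_C(H)^+$ is independent, and $(P_2\cup kP_1)$-freeness makes $G-V(C)$ edgeless. But the one step that must use $\delta(G)\ge\frac{7k-6}{5}$ is never carried out. You say a Bondy-type count ``should give'' an upper bound on $\sum_{s\in S'}d(s)$, and that the domination property ``should force'' $m\le\frac{7k-7}{5}$. You do not write any inequality, fix $S'$, or say how the edges of $C$ are charged. The $1$-toughness step and the reduction to a cubic graph on ten vertices depend on exactly this missing structure. As written, you have only proved that longest cycles are edge-dominating.

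Two auxiliary facts are also inaccurate:
\begin{itemize}
\item Item (iii) is vacuous as stated. For instance, a chord $u_i^+u_j^{++}$ yields only a cycle of the same length, not a longer one.
\item ``Not both $u_i^+v$ and $u_j^+v^+$'' holds only for $v$ on the arc $u_j^+\rC u_i$. On $u_i^+\rC u_j$ the excluded pair is $u_i^+v^+$, $u_j^+v$.
\end{itemize}

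For a working mechanism of this type, see Claims~\ref{cla3.1}, \ref{cla3.2}, \ref{cla:3.6} and \ref{cla3.7} in the proof of Theorem~\ref{thm:main1}. They go through under your hypotheses with $h=x$, $U=N(x)^+$ and $d=m$, with edgelessness of $G-V(C)$ replacing Theorem~\ref{thm:main3}.
\begin{itemize}
\item The inputs are per-vertex, not per-edge. By Lemma~\ref{lem:P2kP1}, each vertex of $N(U)$ sees at least $d-k+1$ vertices of $U$, and at least $d-k+2$ if it is not adjacent to $h$. Also, $V(G)\setminus N(U)$ is independent.
\item $1$-toughness is used centrally, not for clean-up. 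It gives $\alpha(G)\le |V(G)|/2$, and hence an edge of $C$ with both ends in $N(U)$.
\item Exchange arguments on the two ends of that edge, and on the successor of the first $U$-neighbour of the first end, confine their $U$-neighbours to index intervals. This gives $d\le\frac{3k-3}{2}$.
\end{itemize}

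Even this clean argument yields only $\frac{3k-3}{2}$. That equals $\frac{7k-6}{5}$ at $k=3$, leaving the equality case $d=3$ in which the Petersen graph must be identified. For every $k\ge 4$ it is strictly weaker. Reaching $\frac{7k-6}{5}$ therefore needs a finer analysis than anything in this paper, and your proposal contains no ingredient that would supply it.
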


We discuss the sharpness of the connectivity and the minimum degree conditions in Theorems \ref{pre:Hu-Wang-Shen}.
For $k=2$, Both Theorems \ref{pre:Ota-Sanka} and \ref{pre:Hu-Wang-Shen} imply that every $1$-tough and $(P_2 \cup 2P_1)$-free graph is hamiltonian, which is shown in \cite{Li-Broersma-Zhang-2016}.
For $k=3$, Both Theorems \ref{pre:Ota-Sanka} and \ref{pre:Hu-Wang-Shen} imply that every $1$-tough and 3-connected $(P_2 \cup 3P_1)$-free graph is hamiltonian or isomorphic to the Petersen graph.
It is also known that there are infinitely many $1$-tough and 2-connected (but not 3-connected) $(P_2 \cup 3P_1)$-free graph which are not hamiltonian.
On the other hand, for $k \ge 4$, the sharpness of the connectivity and the minimum degree conditions in Theorem \ref{pre:Hu-Wang-Shen} remains unknown.
It has been noted that for each $k \ge 4$, there exist infinitely many $1$-tough and $(k-2)$-connected $(P_2 \cup kP_1)$-free graphs that are not hamiltonian.
In \cite{Ota-Sanka-2024}, Ota and the author states that the following problem might be affirmative, which would constitute an extension of the classical theorem due to Chv\'{a}tal and Erd\H{o}s \cite{Chvatal-Erdos-1972}.

\begin{prob}[Ota and Sanka \cite{Ota-Sanka-2024}]\label{prob:Ota-Sanka}
    For $k \ge 4$, is every $1$-tough and $k$-connected $(P_2 \cup kP_1)$-free graph hamiltonian?
\end{prob}

In this paper, we show the following theorem, which demonstrates that Problem~\ref{prob:Ota-Sanka} is affirmative for large graphs.

\begin{thm}\label{thm:main1}
    Let $k \geq 4$ be an integer, and let $G$ be a $1$-tough and $(k-1)$-connected $(P_2 \cup kP_1)$-free graph.
    If $|V(G)| \ge k^2+k+1$ and $\delta(G) \ge k$, then $G$ is Hamiltonian.
\end{thm}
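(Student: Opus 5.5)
We argue by contradiction, using a longest cycle. Let $G$ be a counterexample and let $C$ be a longest cycle of $G$, given an orientation $\rC$. Since $G$ is $2$-connected (indeed $(k-1)$-connected with $k\ge4$) and non-hamiltonian, there is a component $H$ of $G-V(C)$. Let $N_C(H)=\{u_1,\dots,u_m\}$ be the attachment vertices of $H$ on $C$, listed in the order of $\rC$. For each $i$, let $u_i^+$ denote the successor of $u_i$ on $\rC$.

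First I would record the standard longest-cycle facts. The set $X=\{u_1^+,\dots,u_m^+\}$ is independent. No vertex of $X$ has a neighbour in $H$. Moreover, for any $x\in V(H)$, the set $X\cup\{x\}$ is independent. The usual crossing-chord exchanges also forbid edges $u_i^{+}u_j^{++}$ under the appropriate conditions.

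Next I would count attachments. Connectivity gives $m\ge k-1$. Toughness applied to $S=N_C(H)$ helps further: if $m$ were small, then removing $S$ would separate $H$ from the segments containing $u_i^+$. A more careful count that also uses $\delta(G)\ge k$ should yield $m\ge k$, or else force a structure in which some $u_i$ has two neighbours in $H$ that can be exploited.

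The key tool is the $(P_2\cup kP_1)$-freeness. Take an edge $e$ and an independent set $I$ of size $k$ such that no vertex of $I$ is adjacent to either end of $e$. Such a configuration is forbidden. When $m\ge k$, I would choose $I$ to be $k$ vertices of $X$. For the edge $e$, I would use either an edge inside $H$ (if $|H|\ge2$) or an edge $xu_j$ with $x\in H$. The vertices of $X$ are non-adjacent to $H$, so the only possible adjacencies are between $u_j$ and vertices of $X$. These are controlled by the exchange arguments, since $u_j$ may be adjacent to $u_i^+$, but then rotation constraints apply.

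This is where the hypothesis $|V(G)|\ge k^2+k+1$ should enter. The independence number is at most about $k$ in the relevant sense, and every vertex outside $N[e]$ lies in a set with independence number less than $k$. A Ramsey/pigeonhole count over the $m$ segments of $C$ then shows that some segment $u_i^+\dots u_{i+1}$ is long, of length at least roughly $k+1$. In such a segment we can find an edge $u_i^{++}u_i^{+++}$ far from the other successors. Together with $k$ vertices from $X\cup V(H)$ minus appropriate ones, this yields an induced $P_2\cup kP_1$, giving a contradiction.

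If $|H|=1$, so $H=\{x\}$ and $d(x)\ge k$, then $X\cup\{x\}$ is an independent set of size at least $k+1$. In that case I would use the edge $u_i^+u_i^{++}$ (when the segment has length at least $2$) together with $k$ vertices of $(X\setminus\{u_i^+\})\cup\{x\}$. The non-adjacency of $u_i^{++}$ to $u_j^+$ follows from the standard crossing lemma for longest cycles.

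The main obstacle is the degenerate case where every segment between consecutive $u_i$ is very short, of length $1$ or $2$. Then $C$ consists almost entirely of attachment vertices and their successors, and the forbidden-subgraph argument has no room. Here I would instead use $1$-toughness. The vertex count $|V(G)|\ge k^2+k+1$ together with short segments forces $m$ to be large, at least about $k^2/2$ or so. Taking $S=N_C(H)$ then gives $\omega(G-S)\ge |S|+1$, because $X$ and $H$ lie in distinct components once we show the $u_i^+$ are pairwise separated. That contradicts $1$-toughness. Establishing this separation (no paths among the $u_i^+$ avoiding $S$) via further exchange arguments is the delicate step I expect to consume most of the proof.
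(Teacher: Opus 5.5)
Your plan has a genuine gap at the step that is supposed to produce the contradiction. Theorem~\ref{thm:main3} puts you in the case $H=\{h\}$. (Your own treatment of $|H|\ge 2$ needs $m\ge k$, and $\delta(G)\ge k$ does not give this when vertices of $H$ have neighbours inside $H$.) In this case the edge $u_i^{++}u_j^{+}$ ($j\neq i$) is \emph{not} excluded by any crossing argument. The cycle $u_i h u_j \lC u_i^{++} u_j^{+} \rC u_i$ omits only $u_i^{+}$ and gains $h$, so it is exactly as long as $C$; the paper in fact uses such edges to reroute $C$.

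Worse, the configuration you want can never be found this way. The set $X\cup\{h\}$ is independent, and $u_i^{++}$ is adjacent to $u_i^{+}$. So Lemma~\ref{lem:P2kP1} says $u_i^{++}$ has at most $k-1$ non-neighbours in $X\cup\{h\}$, and the same holds for every attachment vertex $u_j$. Hence no $k$ vertices of $X\cup V(H)$ avoid the edge $u_i^{++}u_i^{+++}$, or $u_i^{+}u_i^{++}$, or $hu_j$, however long the segment is. The pigeonhole step producing a long segment is also unsupported: $1$-toughness only gives $\alpha(G)\le |V(G)|/2$, not an independence number of about $k$.

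The same lemma defeats your degenerate case. If some segment has length at least $3$, then $u_i^{++}\notin S=N_C(H)$ has at least $m-k+2\ge 2$ neighbours in $X$, so the vertices $u_i^{+}$ are not separated in $G-S$. The toughness count with $S=N_C(H)$ works only when every segment has length exactly $2$. Your dichotomy also leaves all intermediate segment lengths untreated.

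What is missing is the counting argument through which $|V(G)|\ge k^2+k+1$ actually enters. In the paper, $U$ is your $X$ and $d$ is your $m$.
\begin{enumerate}[(1)]
\item Choose the longest cycle $C$ and $h\notin V(C)$ so that $m=d_G(h)$ is maximum.
\item Rerouting $C$ through $h$ so that a vertex of $X$ drops off the cycle shows $d_G(y)\le m$ for every $y\in X$. Hence $e_G(X,N_G(X))\le m^2$.
\item $V(G)\setminus N_G(X)$ is independent, since an edge there plus $k$ vertices of $X$ would induce $P_2\cup kP_1$. So $1$-toughness gives $|N_G(X)|\ge |V(G)|/2$.
\item Lemma~\ref{lem:P2kP1} gives every vertex of $N_G(X)\setminus N_G(h)$ at least $m-k+2$ neighbours in $X$.
\item Double counting yields $|V(G)|\le 2m(m+1)/(m-k+2)$. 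Together with $|V(G)|\ge k^2+k+1$ this forces $m>(k^2-k-2)/2$.
\item Toughness supplies $x$ with $x,x^{+}\in N_G(X)$. Exchange arguments on the extreme indices of the neighbours of $x$ and $x^{+}$ in $X$ then give $m\le (3k-3)/2$, which is incompatible with the bound in (5) for $k\ge 4$.
\end{enumerate}
The extremal choice of $(C,h)$, the degree bound on $X$, and the edge count between $X$ and $N_G(X)$ are the ideas your proposal lacks. The lemma that blocks your direct search is exactly what supplies the lower bound in this count.
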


In Theorem \ref{thm:main1}, the condition that the graph is $k$-connected is improved to a stronger condition that the graph is $(k-1)$-connected with minimum degree at least $k$.
For such improvement, we will use the next theorem in the proof of Theorem \ref{thm:main1}.
For a graph $G$ and a cycle $C$ of $G$, we say that $C$ is an \emph{edge-dominating cycle} if every component of $G-V(C)$ consists of only one vertex.

\begin{thm}\label{thm:main3}
    Let $k \geq 4$ be an integer, and let $G$ be a $1$-tough and $(k-1)$-connected $(P_2 \cup kP_1)$-free graph.
    Then, every longest cycle of $G$ is an edge-dominating cycle of $G$.
\end{thm}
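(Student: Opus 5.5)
Suppose, for a contradiction, that some longest cycle $C$ of $G$ leaves a component $H$ of $G-V(C)$ with $|V(H)|\ge 2$. Fix an orientation of $C$; for $v\in V(C)$ write $v^+$ and $v^-$ for its successor and predecessor. The whole argument is a longest-cycle argument. Since $|V(H)|\ge 2$, $H$ contains an edge $xy$. The idea is to find $k$ vertices of $C$ that are pairwise nonadjacent and have no neighbours in $H$. Together with $x$ and $y$ they would induce $P_2\cup kP_1$, which is forbidden.

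\textbf{Step 1: enough attachment vertices.} Let $N_C(H)=\{u_1,\dots,u_m\}$ be the neighbours of $H$ on $C$, listed in the order of $C$. Since $G$ is $(k-1)$-connected and $C$ is not contained in $N_C(H)$ (the successors introduced below are off $N_C(H)$), we get $m\ge k-1$. I also plan to use $1$-toughness to get one more vertex. Let $S=N_C(H)$. If $m=k-1$, then $G-S$ has $H$ as one component, and a second component contains $u_1^+$. I will combine this with the independence from Step 2 to show that $G-S$ has at least $m+1>|S|$ components, contradicting $1$-toughness. So we may assume $m\ge k$.

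\textbf{Step 2: the standard independence facts.} Let $u_i^+$ be the successors of the $u_i$. By maximality of $C$:
\begin{itemize}
\item no $u_i^+$ lies in $N_C(H)$ or has a neighbour in $H$, since otherwise $C$ could be extended through $H$;
\item the set $\{u_1^+,\dots,u_m^+\}$ is independent, by the usual crossing argument: if $u_i^+u_j^+\in E(G)$, reroute $C$ through a path in $H$ from a neighbour of $u_i$ to a neighbour of $u_j$;
\item more strongly, because $|V(H)|\ge 2$, the path through $H$ can be chosen to contain at least two vertices. This is what gives a slack of one vertex.
\end{itemize}

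For the toughness count in Step 1, take $S=N_C(H)$. Then $H$ and the $u_i^+$ lie in pairwise distinct components of $G-S$: two successors in the same component would yield a path between them avoiding $S$, and a Bondy-type/Jung-type lemma on longest cycles then gives a longer cycle. This yields $\omega(G-S)\ge m+1$, a contradiction.

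\textbf{Step 3: the contradiction.} With $m\ge k$, take $x\in V(H)$ adjacent to some $u_i$, and let $y$ be a neighbour of $x$ in $H$. The vertices $u_1^+,\dots,u_k^+$ are pairwise nonadjacent and nonadjacent to both $x$ and $y$. Hence $\{x,y,u_1^+,\dots,u_k^+\}$ induces $P_2\cup kP_1$, contradicting the hypothesis. So every component of $G-V(C)$ is a single vertex.

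The main obstacle is Step 2's component claim, i.e.\ showing in the case $m=k-1$ that the successors lie in distinct components of $G-N_C(H)$. Paths between two successors may run through other parts of $C$, so a simple chord argument does not suffice. If that fails, I would instead use the predecessors $u_i^-$ as well. The plan is to show that, when $m=k-1$, the set $\{u_1^+,\dots,u_{k-1}^+,u_j^-\}$ for a suitable $j$ is independent and anticomplete to $H$. This again gives $k$ vertices which, together with $xy$, form an induced $P_2\cup kP_1$. Excluding the edges $u_i^+u_j^-$ needs a more careful rerouting, using that the path through $H$ has length at least two and that $k\ge 4$ leaves room to choose $j$.
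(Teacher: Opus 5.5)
\textbf{There is a genuine gap, in the case $m=|N_C(H)|=k-1$.} Your overall strategy matches the paper's outline: find an independent set $X\subset V(C)\setminus N_G(H)$ with $|X|\ge k$, then add an edge of $H$. Your Step 3, the case $m\ge k$ using the successors $u_i^+$, is correct. But $m=k-1$ is where all the work lies (the paper handles it by invoking the Bigalke--Jung method), and your argument for it does not hold.

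\textbf{The component claim in Step 2 is false.} You claim that $H$ and the $u_i^+$ lie in pairwise distinct components of $G-N_C(H)$, citing a Bondy/Jung-type lemma. Maximality of $C$ does not give this, and the lemma as you state it is false. Maximality only excludes the edge $u_i^+u_j^+$, and more generally a $u_i^+$--$u_j^+$ path internally disjoint from $C$. A connection in $G-N_C(H)$ between two segments $u_i^+\rC u_{i+1}^-$ may instead use a chord between interior vertices of those segments. Rerouting along such a chord loses vertices of $C$, so it need not produce a longer cycle. Two examples:
\begin{itemize}
\item In the Petersen graph, a longest cycle has $9$ vertices and misses one vertex $h$. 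Then $G-N_G(h)$ consists of $\{h\}$ and a $6$-cycle containing all three successors, so $\omega(G-S)=2<|S|+1$.
\item With $|V(H)|=2$: take a cycle $c_0c_1\cdots c_{2n-1}c_0$ with $n\ge 4$ even, add a path $c_0h_1h_2c_n$ and the chord $c_{n/2}c_{3n/2}$. The original cycle is still longest, yet $c_1$ and $c_{n+1}$ are joined in $G-\{c_0,c_n\}$.
\end{itemize}
So the count $\omega(G-S)\ge m+1$, and with it the reduction to $m\ge k$, is unsupported.

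\textbf{The fallback is a plan, not an argument.} An edge $u_i^+u_j^-$ yields no longer cycle by any standard rerouting. In the Petersen configuration, girth $5$ forces each segment to have exactly two vertices, so every $u_j^-$ is adjacent to $u_{j-1}^+$. When a segment has a single vertex, $u_j^-=u_{j-1}^+$ is not a new vertex at all.

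\textbf{What a correct argument must use.} The Petersen graph shows that when $|V(H)|=1$, the configuration with $|N_C(H)|=\kappa$ and $\{h\}\cup N_C(H)^+$ a maximum independent set really occurs. So the missing $k$-th vertex must come from an argument that uses $|V(H)|\ge 2$ together with $1$-toughness in an essential way; this is what the Bigalke--Jung analysis supplies. Your component claim uses neither.

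\textbf{The ``slack'' remark is not uniform.} Two attachment vertices may have a unique common neighbour in $H$. So a path through $H$ with two interior vertices exists only for some pairs $u_i,u_j$, not for all of them.
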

 
Theorem \ref{thm:main3} can be proven by using the proof method of the following theorem due to Bigalke and Jung \cite{Bigalke-Jung-1979}.

\begin{prethm}[Bigalke and Jung \cite{Bigalke-Jung-1979}]\label{pre:Bigalke-Jung}
    Let $G$ be a $1$-tough graph with $\kappa(G) \ge 3$.
    If $\alpha(G) \le \kappa(G)+1$, then $G$ is hamiltonian or isomorphic to the Petersen graph.
\end{prethm}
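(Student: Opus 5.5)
The plan follows the Chv\'atal--Erd\H{o}s longest-cycle argument and then exploits the single unit of slack in $\alpha(G) \le \kappa(G)+1$. Suppose $G$ is not hamiltonian. Put $\kappa=\kappa(G)\ge 3$ and fix a longest cycle $C$ with an orientation $\rC$. Let $H$ be a component of $G-V(C)$ and $N=N_C(H)=\{x_1,\dots,x_m\}$, listed in the order of $\rC$. Since $G$ is $\kappa$-connected and $C$ is longer than $\kappa$, we have $m\ge\kappa$. Maximality of $C$ gives the standard facts: the successor set $N^+$ is independent, it has no neighbour in $H$, and for any $h\in V(H)$ the set $N^+\cup\{h\}$ is independent. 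The same holds for the predecessor set $N^-$. Hence $m+1\le\alpha(G)\le\kappa+1$, which forces $m=\kappa$ and makes $I=N^+\cup\{h\}$ a maximum independent set. Consequently every vertex outside $I$ has a neighbour in $I$, and the same is true for $N^-\cup\{h\}$.

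Next I would extract strong structure from this maximality.
\begin{enumerate}[(i)]
\item \emph{$H$ is a single vertex.} If $h,h'$ are adjacent vertices of $H$, then $N^+\cup\{h'\}$ is also maximum independent. Using a neighbour of $h'$ on $C$ that is not a neighbour of $h$, or else a crossing-path argument, I would build either a longer cycle or an independent set of size $\kappa+2$. In particular $V(C)\cup\{h\}$ is not all of $G$ only when other such vertices exist, and every component of $G-V(C)$ is a single vertex.
\item \emph{Segments between consecutive neighbours.} Each segment $x_i^+\rC x_{i+1}^-$ should consist of one or two vertices. If some $x_i^{++}\ne x_{i+1}$, then $x_i^{++}$ must be adjacent to some vertex of $N^+$. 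The usual crossing-chord exchange, which reroutes $C$ through $h$, then yields a longer cycle. Alternatively $\{x_i^{++}\}\cup(N^+\setminus\{x_i^+\})\cup\{h\}$ is independent of size $\kappa+2$. A symmetric argument is used with $N^-$.
\end{enumerate}

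Toughness then enters. Let $S=N$ and consider $G-S$. It contains the isolated vertex $h$, and we have (i) and (ii). If $G-V(C)$ has at least two vertices, or the segments have a suitable shape, I expect $\omega(G-S)>|S|$ or an analogous violation with $S$ enlarged by some segment vertices. This contradicts $1$-toughness and leaves only a very rigid configuration: $C$ has length $3\kappa$ or so, with segments of length $2$ alternating with the $x_i$, and $V(G)=V(C)\cup\{h\}$. Here the condition $\kappa\ge 3$ is used to forbid degenerate cases.

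The final and hardest step is the case analysis of this rigid configuration. The $\kappa$ neighbours $x_i$ of $h$ are separated by pairs $x_i^+,x_{i+1}^-$. Each vertex outside $I$ must dominate $I$, each vertex outside $N^-\cup\{h\}$ must dominate $N^-\cup\{h\}$, and there is no longer cycle. I would show that these constraints force $\kappa=3$ and $|V(G)|=10$, with each $x_i^+$ adjacent to exactly one $x_j^-$ in a pentagram pattern. This identifies $G$ as the Petersen graph. Ruling out $\kappa\ge 4$, and carefully listing which chords among the $x_i^{\pm}$ create longer cycles, is the main obstacle. I would attack it by counting: an independent set of size $\kappa+2$ would have to appear once $\kappa\ge4$, since $h$, $N^+$ and a suitably chosen vertex of $N^-$ cannot all be dominated.
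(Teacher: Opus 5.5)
Your first paragraph is correct. For a longest cycle $C$ and a component $H$ of $G-V(C)$ you rightly get $|N_C(H)|=\kappa$, and that $N^+\cup\{h\}$ and $N^-\cup\{h\}$ are maximum independent sets. After that, however, the proposal is a programme rather than a proof, and the one concrete argument you give fails.

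In (ii), the set $\{x_i^{++}\}\cup(N^+\setminus\{x_i^+\})\cup\{h\}$ has $1+(\kappa-1)+1=\kappa+1$ elements, so its independence contradicts nothing. Rerouting along a chord $x_i^{++}x_j^+$, i.e.\ $x_i h x_j \lC x_i^{++}x_j^{+}\rC x_i$, picks up $h$ but drops $x_i^+$. It is therefore a cycle of the same length as $C$, not a longer one. Also, the fact that $x_i^{++}$ has a neighbour in $N^+$ is automatic, since $x_i^{++}x_i^+\in E(C)$.

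The argument in fact proves too much. Write the Petersen graph as a $9$-cycle $c_0c_1\cdots c_8$, a vertex $h$ adjacent to $c_0,c_3,c_6$, and chords $c_1c_5$, $c_4c_8$, $c_2c_7$. With $x_1=c_0$, your hypothesis $x_1^{++}\neq x_2$ holds and $x_1^{++}=c_2$ is adjacent to $x_3^+=c_7$, yet no longer cycle exists. So (ii) as argued would exclude the very configuration you intend to end with.

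Step (i) is likewise only gestured at. The toughness step has no basis either: nothing forces $G-N$ to have more than $|N|$ components. In the Petersen graph it has two, $\{h\}$ and a hexagon.

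What is missing is exactly the heart of the Bigalke--Jung argument as the paper outlines it. One must use $1$-toughness and $\kappa\ge 3$ to produce, unless $G$ is the Petersen graph, an independent set $X\subset V(C)\setminus N_G(H)$ with $|X|\ge\kappa+1$. Then $X\cup\{h\}$ contradicts $\alpha(G)\le\kappa+1$. That route needs neither $|V(H)|=1$ nor a global rigid structure, but it does need toughness to do real work.

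To see this, note that $K_{\kappa,\kappa+1}$ satisfies every hypothesis except $1$-toughness. There $V(C)\setminus N_G(H)=N^+$ has no independent set of size $\kappa+1$, so cycle exchanges and the maximality of $N^+\cup\{h\}$ alone can never supply the extra vertex. Your final paragraph explicitly defers this step (``ruling out $\kappa\ge 4$ \ldots{} is the main obstacle''). As it stands, the proposal gives no contradiction for a non-hamiltonian graph other than the Petersen graph.
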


In the proof of Theorem \ref{pre:Bigalke-Jung}, we consider a longest cycle $C$ in the given graph $G$.
If $C$ is not a hamiltonian cycle, then there is a component $H$ of $G-V(C)$.
Then, if $G$ is not isomorphic to the Petersen graph, we can find an independent set $X \subset V(C) \setminus N_G(H)$ with $|X| \ge \kappa(G)+1$. 
However, then for a vertex $h \in V(H)$, the set $X \cup \{h\}$ is an independent set, a contradiction to $\alpha(G) \le \kappa(G)+1$.

The proof of Theorem \ref{thm:main3}, which we only give its outline here, is quite similar to the proof of Theorem \ref{pre:Bigalke-Jung}.
For a 1-tough and $(k-1)$-connected $(P_2 \cup kP_1)$-free graph $G$, suppose that there is a longest cycle $C$ which is not an edge-dominating cycle.
Then, there is a component $H$ of $G-V(C)$ with $|V(H)| \ge 2$.
By considering the same way as the proof of Theorem \ref{pre:Bigalke-Jung}, we can find an independent set $X \subset V(C) \setminus N_G(H)$ with $|X| \ge k$.
However, then for an edge $h_1h_2 \in E(H)$, $G[X \cup \{h_1,h_2\}]$ is isomorphic to $P_2 \cup kP_1$, a contradiction.
In this way, we obtain Theorem \ref{thm:main3}.

The rest of this paper is organized as follows.
In Section \ref{sec:pre}, we prepare several terms and lemmas.
In Section \ref{sec:result1}, we prove Theorem \ref{thm:main1}.
Finally, in Section \ref{sec:final}, we conclude the paper and discuss open problems.

\section{Preliminary}\label{sec:pre}

In this section, we introduce terminology and lemmas.

For two integers $a,b$ with $a \le b$, we let $[a,b]=\{x \in \mathbb{Z} \mid a \le x \le b\}$.
Let $G$ be a graph and let $v \in V(G)$.
The set of neighbors of $v$ in $G$ is denoted by $N_G(v)$ and the degree of $v$ in $G$ is denoted by $d_G(v)$.
Let $S \subset V(G)$.
We define the neighbors of $S$ by $N_G(S)=\bigcup_{x \in S}(N_G (x) \setminus S)$.
Let $S_1,S_2 \subset V(G)$ with $S_1 \cap S_2 = \emptyset$.
Then, the number of edges between $S_1$ and $S_2$ in $G$ is denoted by $e_G(S_1,S_2)$.
For $S \subset V(G)$, $G[S]$ denotes the subgraph of $G$ induced by $S$.

Next, we give some notation for paths and cycles.
Let $P$ be a path.
For $x,y \in V(P)$, we denote by $xPy$ the path from $x$ to $y$ passing through $P$.
Let $C$ be a cycle with a given orientation.
For $x \in V(C)$, we denote by $x^{+}$ and $x^{-}$ the successor and the predecessor of $x$, respectively.
For $u,v \in V(C)$, we denote by $u \overrightarrow{C}v$ the $uv$-path from $u$ to $v$ along the orientation of $C$.
Also, $u \overleftarrow{C}v$ denotes $v \overrightarrow{C} u$.
For a subset $I \subset V(C)$, let $I^+=\{x^+ \mid x \in I\}$ and $I^-=\{x^- \mid x \in I\}$.

Finally, we introduce the following lemma on $(P_2 \cup kP_1)$-free graphs for each positive integer $k$, which will be used in Section \ref{sec:result1}.
\begin{lem}[{\cite[Lemma 2.3]{Ota-Sanka-2024}}]\label{lem:P2kP1}
    Let $k \ge 1$ be an integer, $G$ be a $(P_2 \cup kP_1)$-free graph and $X \subset V(G)$ be an independent set of $G$.
    Then, for each vertex $v \in V(G)$, $|N_G(v) \cap X| \ge |X|-k+1$ if $N_G(v) \cap X \neq \emptyset$.
\end{lem}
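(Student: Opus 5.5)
The plan is to argue by contradiction. Suppose $v \in V(G)$ satisfies $N_G(v) \cap X \neq \emptyset$ but $|N_G(v) \cap X| \le |X|-k$. Then the set $X \setminus N_G(v)$ has at least $k$ elements. We will combine one neighbor of $v$ in $X$ with $k$ non-neighbors of $v$ in $X$ to build an induced copy of $P_2 \cup kP_1$.

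First, handle the position of $v$. If $v \in X$, then $N_G(v) \cap X = \emptyset$ because $X$ is independent, which contradicts the hypothesis. So $v \notin X$.

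Next, fix a vertex $x \in N_G(v) \cap X$, and choose $k$ distinct vertices $y_1,\dots,y_k \in X \setminus N_G(v)$. Each $y_i \neq x$, since $x$ is adjacent to $v$ and the $y_i$ are not. We check that $G[\{v,x,y_1,\dots,y_k\}]$ has exactly one edge, namely $vx$:
\begin{itemize}
\item $vx$ is an edge by the choice of $x$;
\item $v$ is not adjacent to any $y_i$ by the choice of the $y_i$;
\item the vertices $x,y_1,\dots,y_k$ all lie in the independent set $X$, so there are no edges among them.
\end{itemize}
Hence this induced subgraph is isomorphic to $P_2 \cup kP_1$, contradicting that $G$ is $(P_2 \cup kP_1)$-free. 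Therefore $|N_G(v) \cap X| \ge |X|-k+1$.

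There is no real obstacle in this argument. The only points needing care are confirming $v \notin X$ and that the chosen vertices are distinct, both of which follow immediately from the independence of $X$ and the choice of $x$ and the $y_i$.
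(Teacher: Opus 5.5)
Your proof is correct: if $|N_G(v)\cap X|\le |X|-k$, then $v$ (which cannot lie in $X$, by independence), one neighbor $x\in N_G(v)\cap X$, and $k$ vertices of $X\setminus N_G(v)$ induce exactly $P_2\cup kP_1$, a contradiction. The paper gives no proof of this lemma and simply cites Ota and Sanka; your direct argument is the standard one, and it is the same kind of construction the paper itself uses inline in the proof of Claim~\ref{cla3.1}.
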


\section{Proof of Theorem \ref{thm:main1}}\label{sec:result1}

In this section, we prove Theorem \ref{thm:main1}.

\begin{mainprf}
Let $k \ge 4$ be an integer and $G$ be a 1-tough and $(k-1)$-connected $(P_2 \cup kP_1)$-free graph with $\delta(G) \ge k$ and $|V(G)| \ge k^2+k+1$.
Assume to the contrary that $G$ is not hamiltonian.
Let $C$ be a longest cycle of $G$.
Then $V(G) \setminus V(C) \neq \emptyset$ as $G$ is not hamiltonian.

We choose the longest cycle $C$ and a vertex $h \in V(G) \setminus V(C)$ so that $d_G(h)$ is as large as possible.
By Theorem \ref{thm:main3}, $C$ is an edge-dominaing cycle.
Thus, we have $N_G(h) \subset V(C)$.

We fix an orientation of $C$.
Let $d=|N_G(h)|$.
Then, since $\delta(G) \ge k$, we have $d \ge k$.
Let $v_1,v_2, \ldots, v_d$ be the vertices in $N_G(h)$ appearing in this order along the orientation of $C$.
For each $i \in [1,d]$, let $u_i=v_i^+$ and $w_i=v_{i+1}^-$, where the indices are taken modulo $d$.
Let $U=\{u_i \mid i \in [1,d]\}=N_G(H)^+$.

\begin{cla}\label{cla:longest cycle}
    The set $U \cup \{h\}$ is independent in $G$.
\end{cla}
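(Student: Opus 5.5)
This is the classical longest-cycle exchange argument, and the plan is to derive every non-edge by contradicting the maximality of $|V(C)|$. We use throughout that $N_G(h)\subset V(C)$ and that $h\notin V(C)$.

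First we show that $h$ has no neighbour in $U$. Suppose $hu_i\in E(G)$ for some $i$. Since $u_i=v_i^+$ and $v_i\in N_G(h)$, the path $v_i h u_i$ can replace the edge $v_iu_i$ of $C$. Together with $u_i\overrightarrow{C}v_i$ this gives the cycle
\[
v_i h u_i \overrightarrow{C} v_i,
\]
which has $|V(C)|+1$ vertices. This contradicts the choice of $C$ as a longest cycle. In particular $u_i\neq v_{i+1}$, so $U\cap N_G(h)=\emptyset$.

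Next we show that $U$ itself is independent. Suppose $u_iu_j\in E(G)$ for some $i\neq j$. Form the cycle that starts at $h$, goes to $v_i$, runs backwards along $C$ from $v_i$ to $u_j$, uses the edge $u_ju_i$, runs forwards along $C$ from $u_i$ to $v_j$, and returns to $h$:
\[
h\, v_i \overleftarrow{C} u_j\, u_i \overrightarrow{C} v_j\, h .
\]
The segment $v_i\overleftarrow{C}u_j$ covers the part of $C$ from $u_j$ around to $v_i$, and the segment $u_i\overrightarrow{C}v_j$ covers the part from $u_i$ to $v_j$. These two segments partition $V(C)$, and only the edges $v_iu_i$ and $v_ju_j$ of $C$ are dropped. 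Hence this cycle contains all of $V(C)$ together with $h$, again contradicting that $C$ is longest.

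Two degenerate cases need a quick check. The vertices $u_i$ are pairwise distinct because the $v_i$ are, so no two of them coincide. The case $u_i=v_{i+1}$ was already excluded in the first step, so the segments are well defined.

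Neither step is a real obstacle. The only care needed is in writing the orientations of the two segments correctly so that they partition $V(C)$; this is the same standard argument used for Theorem~\ref{pre:Bigalke-Jung}.
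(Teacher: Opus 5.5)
Your proof is correct and is the standard longest-cycle exchange argument the paper has in mind. The paper omits the proof, saying only that the claim follows easily because $C$ is a longest cycle. The rerouting $h v_i \lC u_j u_i \rC v_j h$ you use is the same one the paper itself uses, with $u_i^+$ in place of $u_i$, in the proof of Claim~\ref{cla3.3}.
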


Claim \ref{cla:longest cycle} can be shown easily by the fact that $C$ is a longest cycle of $G$.
So we omit the proof.

\begin{cla}\label{cla3.1}
The set $V(G) \setminus N_G(U)$ is independent in $G$.
Thus, we have
\begin{equation}
    |N_G(U)| \ge \frac{|V(G)|}{2}. \label{eq3-1}
\end{equation}
Moreover, we have $N_G(U) \subset V(C)$.
\end{cla}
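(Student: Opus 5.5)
The plan is to prove the three assertions of the claim in order: independence first, then the bound \eqref{eq3-1} from $1$-toughness, then the containment $N_G(U) \subset V(C)$ by a cycle-extension argument. Throughout I use that $U$ is independent with $|U| = d \ge k$, by Claim \ref{cla:longest cycle}.

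\emph{Independence.} Suppose to the contrary that $x,y \in V(G)\setminus N_G(U)$ are adjacent. First, neither vertex lies in $U$: if $x\in U$, then $y$ is a neighbor of $x \in U$ outside $U$, so $y\in N_G(U)$, a contradiction. Hence $x,y\notin U$, and neither is adjacent to any vertex of $U$. Pick any $k$ vertices $X\subset U$, which is possible since $d \ge k$. Then $G[X\cup\{x,y\}]$ is an induced copy of $P_2\cup kP_1$, contradicting $(P_2 \cup kP_1)$-freeness.

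\emph{Bound \eqref{eq3-1}.} Let $S=N_G(U)$. By the previous step, every vertex of $V(G)\setminus S$ is an isolated vertex of $G-S$. So $\omega(G-S)=|V(G)|-|S|$, and this is at least $|U|\ge k\ge 2$, since $U\cap S=\emptyset$ by the independence of $U$. As $G$ is $1$-tough, $|S|\ge |V(G)|-|S|$, which gives $|N_G(U)|\ge |V(G)|/2$.

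\emph{Containment.} Suppose some $h'\in V(G)\setminus V(C)$ is adjacent to a vertex of $U$. Note $h'\ne h$, since $U\cup\{h\}$ is independent. Apply Lemma \ref{lem:P2kP1} to the independent set $X=U\cup\{h\}$, which has size $d+1$. Since $N_G(h')\cap X\ne\emptyset$, we get $|N_G(h')\cap X|\ge d+1-k+1\ge 2$. Moreover $h'h\notin E(G)$, because both vertices lie off $C$ and every component of $G-V(C)$ is a single vertex (Theorem \ref{thm:main3}). Thus $h'$ is adjacent to two vertices $u_i,u_j$ with $i\ne j$. Now form the cycle
\[ v_i\, h\, v_j \overleftarrow{C} u_i\, h'\, u_j \overrightarrow{C} v_i . \]
It contains every vertex of $C$ together with $h$ and $h'$, so it is longer than $C$, a contradiction.

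The only delicate step is this last one. Lemma \ref{lem:P2kP1} is needed to guarantee two neighbors of $h'$ in $U$, because a single neighbor does not give a longer cycle; this works precisely because $d\ge k$.
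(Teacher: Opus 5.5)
Your proof is correct and is essentially the paper's argument. Independence comes from an induced $P_2\cup kP_1$ on $\{x,y\}$ together with $k$ vertices of $U$, and the bound comes from $1$-toughness. Containment combines the longer-cycle construction with the forbidden subgraph, using $hh'\notin E(G)$ from Theorem~\ref{thm:main3}.

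The only difference is the order of these two ingredients in the last step. The paper first uses the longer cycle to force $|N_G(y)\cap U|=1$, then exhibits $P_2\cup kP_1$ on $\{y,u_i,h\}\cup U'$. You instead apply Lemma~\ref{lem:P2kP1} to $U\cup\{h\}$ to force two neighbours in $U$, then build the longer cycle.
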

\begin{prf}
    If $xy \in E(G)$ for some $x,y \in V(G) \setminus N_G(U)$, then $G[\{x,y,u_1,\ldots,u_k\}]$ is isomorphic to $P_2 \cup kP_1$, a contradiction.
    Thus, the set $V(G) \setminus N_G(U)$ is independent in $G$.
    Since $G$ is 1-tough, it holds that $|V(G)|-|N_G(U)|=|V(G) \setminus N_G(U)|  \le \alpha(G) \le \frac{|V(G)|}{2}$, which implies (\ref{eq3-1}).

    For $y \in V(G) \setminus V(C)$, suppose that $yu_i \in E(G)$ for some $u_i$.
    Then, since $C$ is a longest cycle of $G$, we have $y \neq h$ and $N_G(y) \cap U=\{u_i\}$.
    However, then for a subset $U' \subset U \setminus \{u_i\}$ with $|U'|=k-1$, the graph $G[\{y,u_i,h\} \cup U']$ is an induced subgraph of $G$ isomorphic to $P_2 \cup kP_1$, a contradiction.
    Thus, for any $y \in V(G) \setminus V(C)$, we have $y \notin N_G(U)$, that is, $y \in V(G) \setminus N_G(U)$.
    Therefore, we have $V(G) \setminus V(C) \subset V(G) \setminus N_G(U)$, equivalently $N_G(U) \subset V(C)$.
    \qed
\end{prf}

\begin{cla}\label{cla3.2}
    Let $y \in N_G(U)$.
    Then, $|N_G(y) \cap U| \ge d-k+1$.
    Also, if $y \notin N_G(h)$, then $|N_G(y) \cap U| \ge d-k+2$.
    This together with the inequality (\ref{eq3-1}) implies
    \begin{equation}
    e_G(U, N_G(U)) \ge (d-k+2)|N_G(U)|-d \ge \frac{d-k+2}{2}|V(G)|-d.\label{eq3-2}
\end{equation}
\end{cla}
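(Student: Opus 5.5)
The plan is to apply Lemma~\ref{lem:P2kP1} to the independent set $U$ (and to $U \cup \{h\}$), which is independent by Claim~\ref{cla:longest cycle}, and then count edges.

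\textbf{First bound.} Let $y \in N_G(U)$. Then $N_G(y) \cap U \neq \emptyset$ and $|U| = d$, so Lemma~\ref{lem:P2kP1} gives $|N_G(y) \cap U| \ge d-k+1$ directly.

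\textbf{Second bound.} Suppose in addition that $y \notin N_G(h)$. We apply Lemma~\ref{lem:P2kP1} instead to the larger independent set $X = U \cup \{h\}$, of size $d+1$. We still have $N_G(y) \cap X \supseteq N_G(y) \cap U \neq \emptyset$, so the lemma yields $|N_G(y) \cap X| \ge d+1-k+1 = d-k+2$. Since $y \notin N_G(h)$, the vertex $h$ contributes nothing, and hence $|N_G(y) \cap U| = |N_G(y)\cap X| \ge d-k+2$. We also need $y \neq h$, which holds because $h \notin N_G(U)$ by the independence of $U \cup \{h\}$.

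\textbf{Summing.} Sum over $y \in N_G(U)$. Every such $y$ contributes at least $d-k+2$, except the vertices of $N_G(h) \cap N_G(U)$. There are at most $|N_G(h)| = d$ of these, and each contributes at least $d-k+1$, so each loses at most $1$ relative to the bound $d-k+2$. Therefore
\[
e_G(U, N_G(U)) \ge (d-k+2)|N_G(U)| - d.
\]
Since $d \ge k$, the coefficient $d-k+2$ is positive, so we may substitute $|N_G(U)| \ge |V(G)|/2$ from (\ref{eq3-1}) to obtain (\ref{eq3-2}).

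\textbf{Main obstacle.} The argument is essentially routine. The only point needing care is that $h$ may be counted in Lemma~\ref{lem:P2kP1}, and this is exactly why we pass to $U \cup \{h\}$ in the second bound.
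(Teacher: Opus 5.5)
Your proposal is correct and matches the paper's proof: Lemma~\ref{lem:P2kP1} applied to $U$ gives the first bound, and applied to the independent set $U \cup \{h\}$ it gives the second. Your summation spells out the counting step the paper leaves implicit, namely that at most $d$ vertices (those in $N_G(h)$) lose one relative to $d-k+2$, followed by substituting (\ref{eq3-1}) using $d-k+2>0$.
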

\begin{prf}
    Let $y \in N_G(U)$.
    Then, Lemma \ref{lem:P2kP1} implies $|N_G(y) \cap U| \ge |U|-k+1=d-k+1$.
    If $y \notin N_G(h)$, then Lemma \ref{lem:P2kP1} implies $|N_G(y) \cap U| = |N_G(y) \cap (U \cup \{h\})| \ge d-k+2$.\qed
\end{prf}

\begin{cla}\label{cla3.3}
    For each $u_i \in U$, it holds that $d_G(u_i) \le d$.
    This implies
    \begin{equation}
    e_G(U, N_G(U)) \le d^2.\label{eq3-3}
    \end{equation}
\end{cla}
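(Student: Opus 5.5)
The plan is to exploit the choice of $C$ and $h$ maximizing $d_G(h)$: for each $i$, I will build a longest cycle $C'$ with $V(C') = (V(C)\setminus\{u_i\})\cup\{h\}$. Then $u_i \in V(G)\setminus V(C')$, so the maximality in the choice of $(C,h)$ gives $d_G(u_i)\le d_G(h)=d$. Since $U$ is independent by Claim~\ref{cla:longest cycle}, every neighbor of $u_i$ lies in $N_G(U)$. Hence $e_G(U,N_G(U))=\sum_{i=1}^d d_G(u_i)\le d^2$, which is (\ref{eq3-3}).

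To construct $C'$, look at $z=u_i^{+}$.

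\emph{First case: $z=v_{i+1}$.} Then $h$ is adjacent to both $v_i$ and $v_{i+1}$. Replacing $u_i$ by $h$ in $C$, that is, taking the cycle $v_i h v_{i+1}\rC v_i$, gives the desired $C'$.

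\emph{Second case: $z\ne v_{i+1}$.} By the definition of $v_{i+1}$ there is no neighbor of $h$ strictly between $v_i$ and $v_{i+1}$, so $z\notin N_G(h)$. Since $z$ is adjacent to $u_i$, we have $z\in N_G(U)$. The second part of Claim~\ref{cla3.2} then gives
\[
|N_G(z)\cap U|\ge d-k+2\ge 2,
\]
because $d\ge k$. So there is some $j\ne i$ with $zu_j\in E(G)$.

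Now define
\[
C' = h\, v_i \lC u_j\, u_i^{+} \rC v_j\, h .
\]
The edges used are $hv_i$ and $hv_j$, since $v_i,v_j\in N_G(h)$, and the chord $u_ju_i^{+}$. Going backward from $v_i$ to $u_j=v_j^{+}$ covers the segment $u_j\rC v_i$ of $C$. Going forward from $u_i^{+}$ to $v_j$ covers the segment $u_i^{+}\rC v_j$. Together these two segments contain every vertex of $C$ except $u_i$, each exactly once. Adding $h$ shows that $C'$ is a cycle of length $|V(C)|$, hence a longest cycle avoiding $u_i$.

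The main point to verify carefully is this rotation: that the two arcs are disjoint and together miss exactly $u_i$, including the degenerate cases $j=i+1$ and $u_j=v_i$. It also matters that $d\ge k$ makes $d-k+2\ge 2$, so the neighbor $u_j$ really differs from $u_i$. Everything else is immediate from the maximal choice of $d_G(h)$.
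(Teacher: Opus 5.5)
Your proposal is correct and follows the paper's proof essentially step for step: the same case split on whether $u_i^+=v_{i+1}$, the same use of Claim~\ref{cla3.2} to find a chord $u_ju_i^+$ with $j\neq i$, and the same rerouted cycle $v_i \lC u_j u_i^+ \rC v_j h v_i$; the only difference is that you build the cycle directly rather than arguing by contradiction. Your remark that independence of $U$ gives $e_G(U,N_G(U))=\sum_i d_G(u_i)$ fills in the ``this implies'' step that the paper leaves implicit.
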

\begin{prf}
    Suppose that $d_G(u_i) > d$ for some $u_i \in U$.
    If $u_i^+ \in N_G(h)$, that is, $u_i^+=v_{i+1}$, then 
    \[C'=v_i \overleftarrow{C} v_{i+1} h v_i \]
    is a longest cycle of $G$ with $u_i \in V(G) \setminus V(C')$.
    However, the choice of $C$ and $h$ implies $d=d_G(g) \ge d_G(u_i)>d$, a contradiction.
    Hence $u_i^+ \notin N_G(h)$.
    Then, since $u_i^+ \in N_G(u_i) \setminus N_G(h) \subset N_G(U) \setminus N_G(h)$, we have $|N_G(u_i^+) \cap U| \ge d-k+2 \ge 2$ by Lemma~\ref{lem:P2kP1}.
    Thus, there exists $u_j \in U \setminus \{u_i\}$ such that $u_j u_i^+ \in E(G)$.
    Let 
    \[C''=v_i \overleftarrow{C} u_j u_i^+ \overrightarrow{C} v_j h v_i.\]
    Then $C''$ is also a longest cycle of $G$ and $u_i \in V(G) \setminus V(C'')$.
    However, the choice of $C$ and $h$ also implies $d=d_G(h) \ge d_G(u_i)>d$, a contradiction.
    Therefore, $d_G(u_i) \le d$ for every $u_i \in U$. \qed    
\end{prf}

\begin{cla}\label{cla:3.5}
    It holds that $d >\frac{k^2-k-2}{2}$.
\end{cla}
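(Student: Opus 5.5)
Combining (\ref{eq3-2}) and (\ref{eq3-3}) gives
\[
\frac{d-k+2}{2}|V(G)|-d \le e_G(U,N_G(U)) \le d^2 .
\]
Since $d \ge k$, the coefficient $d-k+2$ is at least $2$, so we may substitute $|V(G)| \ge k^2+k+1$. This yields
\[
(d-k+2)(k^2+k+1) \le 2d^2+2d .
\]
From this we aim to show $d > \frac{k^2-k-2}{2}$, arguing by contradiction.

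Suppose $d \le \frac{k^2-k-2}{2}$, and set $f(d)=2d^2+2d-(d-k+2)(k^2+k+1)$. This is a convex quadratic in $d$, and the inequality says $f(d)\ge 0$. It therefore suffices to show $f<0$ at both endpoints of the interval $k \le d \le \frac{k^2-k-2}{2}$. By convexity, $f$ is then negative on the whole interval, which gives the contradiction.

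At $d=k$ we get
\[
f(k)=2k^2+2k-2(k^2+k+1)=-2<0 .
\]
At $d=D:=\frac{k^2-k-2}{2}$ we have $D-k+2=\frac{k^2-3k+2}{2}=\frac{(k-1)(k-2)}{2}$, and
\[
2D^2+2D=2D(D+1)=\frac{(k^2-k-2)(k^2-k)}{2}=\frac{(k-2)(k+1)k(k-1)}{2}.
\]
So $f(D)<0$ is equivalent to
\[
(k-2)(k+1)k(k-1) < (k-1)(k-2)(k^2+k+1) .
\]
Dividing by $(k-1)(k-2)>0$, this reduces to $k^2+k<k^2+k+1$, which is true. Note that for $k \ge 4$ we have $D \ge k$, so the interval is nonempty; if it were empty, the claim would follow directly from $d \ge k$.

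The one delicate point is the bookkeeping in the first display: checking that the coefficient of $|V(G)|$ is positive, so that the lower bound on $|V(G)|$ may be used. The threshold $k^2+k+1$ is exactly what makes the endpoint computation at $d=D$ strict, so the factorization there is the step to verify carefully.
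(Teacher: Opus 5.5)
Your proof is correct and follows the paper's route. The paper also combines (\ref{eq3-2}) and (\ref{eq3-3}) into $|V(G)| \le \frac{2d(d+1)}{d-k+2}$ and notes that this function of $d$ is at most $k^2+k$ on $k \le d \le \frac{k^2-k-2}{2}$, where it equals $k^2+k$ at both endpoints; clearing the denominator, as you do, gives a convex quadratic and so makes the convexity and endpoint checks explicit.
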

\begin{prf}
    Assume to the contrary that $d \le \frac{k^2-k-2}{2}$. Note that $k < \frac{k^2-k-2}{2}$ since $k \ge 4$.
    By the inequalities (\ref{eq3-2}) and (\ref{eq3-3}), we have $|V(G)| \le \frac{2d(d+1)}{d-k+2}$.
    Then, since the function $f(d)=\frac{2d(d+1)}{d-k+2}$ attains its maximum value $f(k)=k^2+k$ on $k \le d \le \frac{k^2-k-2}{2}$, we have $|V(G)| \le k^2+k$, a contradiction to $|V(G)| \ge k^2+k+1$. \qed
\end{prf}

\begin{cla}\label{cla:3.6}
    There is a vertex $x \in V(C)$ such that $\{x,x^+\} \subset N_G(U)$.
\end{cla}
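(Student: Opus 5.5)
The plan is a counting argument by contradiction, combining inequality (\ref{eq3-1}) with the fact that $N_G(U) \subset V(C)$, both from Claim \ref{cla3.1}. Assume that no vertex $x \in V(C)$ satisfies $\{x,x^+\} \subset N_G(U)$. In other words, $N_G(U)$ contains no two vertices that are consecutive along the cycle $C$.

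The first step is to bound $|N_G(U)|$ from above using only the structure of $C$. By Claim \ref{cla3.1}, $N_G(U) \subset V(C)$. A set of vertices of a cycle of length $|V(C)|$ with no two consecutive vertices has size at most $\lfloor |V(C)|/2 \rfloor$. To see this, map each chosen vertex $x$ to the edge $xx^+$ of $C$. These edges are pairwise distinct, and the edges $x^-x$ are distinct from them because $x^- \notin N_G(U)$. This gives $2|N_G(U)| \le |E(C)| = |V(C)|$, and hence $|N_G(U)| \le |V(C)|/2$.

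The second step uses that $h \in V(G) \setminus V(C)$, so $|V(C)| \le |V(G)|-1$. Then
\[
|N_G(U)| \le \frac{|V(C)|}{2} \le \frac{|V(G)|-1}{2} < \frac{|V(G)|}{2},
\]
which contradicts inequality (\ref{eq3-1}). Therefore some $x \in V(C)$ has $\{x,x^+\} \subset N_G(U)$.

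I expect no real obstacle here. The only points needing care are that the containment $N_G(U) \subset V(C)$ from Claim \ref{cla3.1} is used essentially, since otherwise vertices off $C$ could pad $N_G(U)$, and that the strictness of the final inequality comes from $h \notin V(C)$.
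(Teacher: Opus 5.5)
Your proof is correct and is essentially the paper's argument: assume no two consecutive vertices of $C$ lie in $N_G(U)$, then use $N_G(U)\subset V(C)$ and $V(C)\neq V(G)$ to contradict inequality (\ref{eq3-1}). The only difference is cosmetic. The paper also uses the independence of $V(G)\setminus N_G(U)$ to get exact alternation along $C$ and then counts $|V(G)\setminus N_G(U)|$, whereas your direct bound $2|N_G(U)|\le |V(C)|$ makes that alternation step unnecessary.
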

\begin{prf}
    Note that for any edge $xy \in E(C)$, we have $\{x,y\} \cap N_G(U) \neq \emptyset$ by Claim \ref{cla3.1}.
    Suppose to the contrary that there is no vertex $x \in V(C)$ such that $\{x,x^+\} \subset N_G(U)$.
    Then, the vertices on $C$ are alternatingly contained in $N_G(U)$ and $V(C) \setminus N_G(U)$.
    However, then Claim \ref{cla3.1} implies $|V(G) \setminus N_G(U)|=|V(G)|-|V(C)|+\frac{|V(C)|}{2}>\frac{|V(G)|}{2}$, a contradiction to $|V(G) \setminus N_G(U)| \le \frac{|V(G)|}{2}$ since $G$ is 1-tough. \qed
\end{prf}

We fix a vertex $x \in V(C)$ with $x,x^+ \in N_G(U)$.
Note that $x \notin N_G(h)$ as $v_i^+=u_i \notin N_G(U)$ for any $v_i \in N_G(h)$.
Therefore, we may assume $x \in V(u_d^+ \rC w_d)$.
We define $l_x$ and $r_x$ as 
\[l_x= \min\{i \in [1,d] \mid u_i x \in E(G)\} \text{ and } r_x=\max\{i \in [1,d] \mid u_i x^+ \in E(G)\}.\]

\begin{cla}\label{cla3.7}
    The following statements hold.
    \begin{enumerate}[(i)]
        \item $N_G(x) \cap U \subset \{u_i \mid i \in [l_x,d]\}$ and $|N_G(x) \cap U| \ge d-k+2$,
        \item $N_G(x^+) \cap U \subset \{u_i \mid i \in [1,r_x]\}$ and $|N_G(x^+) \cap U| \ge d-k+1$, and
        \item $u_{l_x}^+ \notin N_G(h)$, $N_G(u_{l_x}^+) \cap U \subset \{u_i \mid i \in [r_x,l_x]\}$ and $|N_G(u_{l_x}^+) \cap U| \ge d-k+2$.
    \end{enumerate}
\end{cla}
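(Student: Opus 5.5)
For parts (i) and (ii) I would first note that the inclusions are almost immediate, and the only real content is the inequality $r_x \le l_x$.

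Write the cyclic order as $v_1u_1\dots v_2u_2\dots v_du_d\dots xx^+\dots v_1$, which is possible because $x\in V(u_d^+\rC w_d)$. By the definition of $l_x$ as a minimum, every $u_i\in N_G(x)$ satisfies $i\ge l_x$. By the definition of $r_x$ as a maximum, every $u_j\in N_G(x^+)$ satisfies $j\le r_x$.

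To prove $r_x\le l_x$, suppose $u_ix,u_jx^+\in E(G)$ with $i<j$. Then
\[ h v_j \lC u_i\, x \lC u_j\, x^+ \rC v_i h \]
is a cycle through $V(C)\cup\{h\}$, which contradicts the maximality of $C$. Hence every index of a $U$-neighbour of $x^+$ is at most every index of a $U$-neighbour of $x$, so $r_x\le l_x$.

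For the cardinality bounds I would use Claim~\ref{cla3.2}. Since $x\in N_G(U)\setminus N_G(h)$, we get $|N_G(x)\cap U|\ge d-k+2$. Since $x^+\in N_G(U)$, we get $|N_G(x^+)\cap U|\ge d-k+1$.

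For (iii), put $l=l_x$ and $r=r_x$, so that $r\le l$ by the above. First I show $u_l^+\notin N_G(h)$. Suppose instead $u_l^+=v_{l+1}$. If $r<l$, then
\[ h v_{l+1}\rC x\, u_l \lC u_r\, x^+ \rC v_r h \]
is a cycle through $V(C)\cup\{h\}$. If $r=l$, then $u_l$ is adjacent to both $x$ and $x^+$, and
\[ h v_{l+1}\rC x\, u_l\, x^+\rC v_l h \]
is such a cycle. Either case contradicts the maximality of $C$. Consequently $u_l^+\in N_G(u_l)\setminus N_G(h)\subset N_G(U)\setminus N_G(h)$, and Claim~\ref{cla3.2} gives $|N_G(u_l^+)\cap U|\ge d-k+2$.

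The remaining task is the inclusion $N_G(u_l^+)\cap U\subset\{u_i\mid i\in[r,l]\}$. I would split a hypothetical neighbour $u_i$ of $u_l^+$ into two cases, $i>l$ and $i<r$, and in each case build a cycle longer than $C$. This cycle should combine the chord $u_iu_l^+$ with one of the chords $xu_l$ or $x^+u_r$ and the two edges from $h$ to suitable $v$'s. In the case $i<r$ I would try
\[ h v_l \lC u_i\, u_l^+ \rC x\, u_l \lC u_r\, x^+ \rC v_i h, \]
checking the details in the subcase $i=r$. In the case $i>l$ I expect to need the chord $x^+u_r$ as well, with a route of the form $h v_i \lC u_l^+\, u_i \rC x \dots$, where the return through $x^+$ has to be redirected via $u_r$.

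This last case, $i>l$, is where I expect the main obstacle. There the naive reroute ends at $x^+$, which is not adjacent to $h$. So one must find the right combination of the chords $u_iu_l^+$, $xu_l$ and $x^+u_r$, and the right choice among $v_i,v_l,v_r,v_{l+1}$ as the two ends at $h$, so that every segment of $C$ is traversed exactly once.
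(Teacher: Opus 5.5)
Your arguments for (i), (ii), for $r_x\le l_x$ and for $u_{l_x}^+\notin N_G(h)$ are correct and match the paper; your cycles are the paper's, read backwards. The gap is the inclusion $N_G(u_l^+)\cap U\subset\{u_i\mid i\in[r,l]\}$, which is the real content of (iii) and which you do not establish.

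For $i<r$, your proposed closed walk
\[ h v_l \lC u_i\, u_l^+ \rC x\, u_l \lC u_r\, x^+ \rC v_i h \]
is not a cycle when $r<l$. The piece $v_l\lC u_i$ traverses $u_i\rC v_l$, and the piece $u_l\lC u_r$ traverses $u_r\rC u_l$. These share the nonempty path $u_r\rC v_l$, so the formula is valid only in the degenerate case $r=l$, where $v_l=v_r$. For $i>l$ you give no cycle, so that case is simply left open. Also, $i=r$ is not a case to be ruled out, since $r\in[r,l]$.

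The missing observation is that in both cases $h$ must be attached at $v_r$ and $v_i$, never at $v_l$. The path $P=v_r\lC x^+\,u_r\rC u_l\,x\lC u_l^+$ runs through all of $V(C)$, from the $h$-neighbour $v_r$ to $u_l^+$. For every $i\notin[r,l]$, the vertex $u_i$ immediately precedes $v_i$ on $P$: for $i<r$ this happens inside $v_r\lC x^+$, and for $i>l$ inside $x\lC u_l^+$. So a chord $u_iu_l^+$ gives the rotation $h v_r P u_i\, u_l^+ \overleftarrow{P} v_i h$, a cycle of length $|V(C)|+1$. Written out:
\begin{itemize}
\item for $i<r$, this is $h v_r \lC u_i\, u_l^+ \rC x\, u_l \lC u_r\, x^+ \rC v_i h$, which is your formula with $v_l$ replaced by $v_r$;
\item for $i>l$, this is $h v_r \lC x^+\, u_r \rC u_l\, x \lC u_i\, u_l^+ \rC v_i h$. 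Read backwards, it is your sketched route $h v_i\lC u_l^+\,u_i\rC x$, completed by $x\,u_l\lC u_r\,x^+\rC v_r h$.
\end{itemize}
These are exactly the two cycles the paper uses.
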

\begin{prf}
    By the definitions of $l_x$ and $r_x$, we have $N_G(x) \cap U \subset \{u_i \mid i \in [l_x,d]\}$ and $N_G(x^+) \cap U \subset \{u_i \mid i \in [1,r_x]\}$.
    Since $x \notin N_G(h)$, Claim \ref{cla3.2} implies $|N_G(x) \cap U| \ge d-k+2$ and $|N_G(x^+) \cap U| \ge d-k+1$.
    Therefore, we obtain (i) and (ii).

    We consider the vertex $u_{l_x}^+$.
    If $l_x < r_x$, then
    \[xu_{l_x} \overrightarrow{C} v_{r_x} h v_{l_x} \overleftarrow{C} x^+ u_{r_x} \overrightarrow{C}x\]
    is a cycle longer than $C$ in $G$, a contradiction.
    Thus, $l_x \ge r_x$.
    If $u_{l_x}^+ \in N_G(h)$, then
    \[h v_{r_x} \lC x^+ u_{r_x} \rC u_{l_x} x \lC u_{l_x}^+ h\]
    is a cycle longer than $C$ in $G$, a contradiction.
    Thus, $u_{l_x}^+ \notin N_G(h)$.
    As $u_{l_x}^+ \in N_G(u_{l_x}) \subset N_G(U)$, Claim \ref{cla3.2} implies $|N_G(u_{l_x}^+) \cap U| \ge d-k+2 \ge 2$.
    If $u_{l_x}^+ u_j \in E(G)$ for some $j \in [1,r_x-1]$, then
    \[h v_j \lC x^+ u_{r_x} \rC u_{l_x} x \lC u_{l_x}^+ u_j \rC v_{r_x} h\]
    is a cycle in $G$ longer than $C$, a contradiction.
    If $u_{l_x}^+ u_j \in E(G)$ for some $j \in [l_x+1,d]$, then
    $$h v_{r_x} \lC x^+ u_{r_x} \rC u_{l_x} x \lC u_j u_{l_x}^+ \rC v_j h$$
    is a cycle in $G$ longer than $C$, a contradiction.
    Therefore, we obtain $N_G(u_{l_x}^+) \cap U \subset \{u_i \mid i \in [r_x,l_x]\}$, and thus (iii) holds. \qed
\end{prf}

    The statements (i), (ii) and (iii) in Claim \ref{cla3.7} imply that
        \begin{eqnarray*}
            l_x &\le& k-1,\label{eq4-1}\\
            r_x &\ge& d-k+1, \text{ and} \label{eq4-2}\\
            d-k+1 \le l_x-r_x &\le& 2k-d-2 . \label{eq4-3}
        \end{eqnarray*}
    Thus, we get $d \le \frac{3k-3}{2}$.
    This together with Claim \ref{cla:3.5}, we obtain  $\frac{k^2-k-2}{2} <d \le \frac{3k-3}{2}$.
    However, then $k < 2+\sqrt{3}<4$, a contradiction to $k \ge 4$.
This completes the proof of Theorem~\ref{thm:main1}. \qed
\end{mainprf}

\section{Remaining problems}\label{sec:final}

In this paper, we have shown that for an integer $k \ge 4$ and a 1-tough and $(k-1)$-connected $(P_2 \cup kP_1)$-free graph $G$, if $\delta(G) \ge k$ and $|V(G)| \ge k^2+k+1$, then $G$ is hamiltonian.
We believe that Problem \ref{prob:Ota-Sanka} is affirmative.
Actually, for $k \in \{4,5\}$, we have succeeded to prove that every $1$-tough and $(k-1)$-connected $(P_2 \cup kP_1)$-free graph with minimum degree at least $k$ is hamiltonian.
(We are preparing a paper on it now.)

\begin{figure}[htbp]
    \centering
    \includegraphics{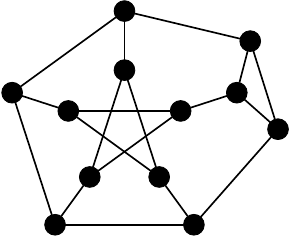}

    \caption{A non-hamiltonian $1$-tough and $3$-connected $(P_2 \cup 4P_1)$-free graph}\label{fig6:non hamiltonian P24P1-free graph}
\end{figure}

Finally, we make some remarks on the minimum degree condition.
Based on Theorems~\ref{thm:main1} and \ref{pre:Bigalke-Jung}, there is a point of interest regarding the hamiltonicity of 1-tough and $(k-1)$-connected $(P_2 \cup kP_1)$-free graph $G$ with $\delta(G)=k-1$.
We have found a non-hamiltonian 1-tough and 3-connected $(P_2 \cup 4P_1)$-free graph; an example is shown in Figure~\ref{fig6:non hamiltonian P24P1-free graph}.
However, we do not find a large example. 
So we have the following problem.

\begin{prob}\label{newcon2}
    For each integer $k \geq 4$, is there a constant $c_k$ such that every $1$-tough and $(k-1)$-connected $(P_2 \cup kP_1)$-free graph on at least $c_k$ vertices is hamiltonian?
\end{prob}

\subsection*{Acknowledgement}

We would like to thank Professor Katsuhiro Ota and Professor Michitaka Furuya for their valuable comments.
This work was supported by the Research Institute for Mathematical Sciences, an International Joint Usage/Research Center located in Kyoto University.

\subsection*{Data availability}

No new data is created or analysed in this study.

\section*{Declarations}

\subsection*{Conflicts of interest}
The author has no relevant financial or non-financial interests to disclose.

\bibliography{Phd} 
\bibliographystyle{myplain} 

\end{document}